\documentclass{amsart} 
\usepackage{amssymb}
\usepackage{amsmath}
\usepackage{amsfonts}

\sloppy

\begin{document}
\newtheorem{theo}{Theorem}[section]
\newtheorem{prop}[theo]{Proposition}
\newtheorem{lemma}[theo]{Lemma}
\newtheorem{coro}[theo]{Corollary}
\theoremstyle{definition}
\newtheorem{exam}[theo]{Example}
\newtheorem{defi}[theo]{Definition}
\newtheorem{rem}[theo]{Remark}


\newcommand{\Bb}{{\bf B}}
\newcommand{\Cb}{{\bf C}}
\newcommand{\Nb}{{\bf N}}
\newcommand{\Qb}{{\bf Q}}
\newcommand{\Rb}{{\bf R}}
\newcommand{\Zb}{{\bf Z}}
\newcommand{\Ac}{{\mathcal A}}
\newcommand{\Bc}{{\mathcal B}}
\newcommand{\Cc}{{\mathcal C}}
\newcommand{\Dc}{{\mathcal D}}
\newcommand{\Fc}{{\mathcal F}}
\newcommand{\Ic}{{\mathcal I}}
\newcommand{\Jc}{{\mathcal J}}
\newcommand{\Kc}{{\mathcal K}}
\newcommand{\Lc}{{\mathcal L}}
\newcommand{\Mx}{{\mathcal M}}
\newcommand{\Nc}{{\mathcal N}}
\newcommand{\Oc}{{\mathcal O}}
\newcommand{\Pc}{{\mathcal P}}
\newcommand{\Qc}{{\mathcal Q}}
\newcommand{\Sc}{{\mathcal S}}
\newcommand{\Tc}{{\mathcal T}}
\newcommand{\Uc}{{\mathcal U}}
\newcommand{\Vc}{{\mathcal V}}
\newcommand{\btu}{\bigtriangleup}

\author{Charles Akemann, Simon Wassermann and Nik Weaver}

\title [Pure states on free group C*-algebras]
        {Pure states on free group C*-algebras}

\address {Department of Mathematics\\
           University of California\\
           Santa Barbara, CA 93106, USA}
\address {Department of Mathematics\\
           University of Glasgow\\
           Glasgow G12 9XF, UK}
\address {Department of Mathematics\\
           Washington University in Saint Louis\\
           Saint Louis, MO 63130, USA}

\email {akemann@math.ucsb.edu, asw@maths.gla.ac.uk,
nweaver@math.wustl.edu}

\date{\today}

\begin{abstract}
We prove that all the pure states of the reduced C*-algebra of a
free group on an uncountable set of generators are *-automorphism
equivalent and extract some consequences of this fact. (AMS
classification $46\mathrm{L}05$)
\end{abstract}

\maketitle

NOTATION: For any set $R$ with two or more elements, let $F_{R}$
denote the free group on $R$ with generators $\{u_r: r\in R\}$ and
let $C^*_r(F_{R})$ denote the reduced group C*-algebra. We won't
distinguish between the elements of  $F_{R}$ and the corresponding
unitary operators in $C^*_r(F_{R})$. In what follows, $r_0$ will
be a fixed element of $R$, $u$ will denote the generator $u_{r_0}$
and $F_u$ will denote the subgroup of $F_R$ generated by $u$. We
view $C^*_r(F_{S})$ as the C*-subalgebra of $C^*_r(F_{R})$
generated by the unitaries $\{u_s : s \in S\}$ and $C^*_r(F_u)$ as
the C*-subalgebra generated by $u$. Let $P_{u}$ (resp. $P_{S}$)
denote the unique trace preserving conditional expectation from
$C^*_r(F_{R})$ onto $C^*_r(F_u)$ (respectively $C^*_r(F_{S})$).
Recall that $C^*_r(F_u)$ is *-isomorphic to the *-algebra of
continuous complex valued functions on the unit circle, with $u$
going into the function $\theta(z)=z$. Let $f_{0}$ denote the
(unique!) pure state of $C^*_r(F_u)$ that satisfies $f_{0}(u)=1$
and let $f=f_{0}\circ P_{u}$.

\bigskip
If $R$ is uncountable, then $C^*_r(F_R)$ is inseparable. For
Card$(R) = \aleph_1$, the algebra $C^*_r(F_R)$ is discussed in
\cite[Cor.\ 6.7]{pop}, where it is shown that $C^*_r(F_R)$ is
inseparable, but that every abelian subalgebra is separable.
Powers \cite{pow} showed that for Card$(R) =2, C^*_r(F_{R})$ is
simple and has unique trace. Powers' method extends to general
$R$. For general free products of groups, simplicity and
uniqueness of trace follow by results of Avitzour \cite{av}. In
\cite{a} and \cite{al} the methods of \cite{ao} were used to
extend the simplicity and uniqueness of trace results to a host of
other group C*-algebras where free sets lurked in the underlying
groups. In \cite{arch} Rob Archbold also obtained related results.

\begin{lemma}\label{LM} If $S \subset R$,  Card $(S)>1$ and
$\alpha$  is a *-automorphism of $C^*_r(F_{S})$,  then $\alpha$
has an extension to a *-automorphism of $C^*_r(F_{R})$.

\end{lemma}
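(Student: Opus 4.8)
The plan is to exhibit $C^*_r(F_R)$ as a reduced free product having $C^*_r(F_S)$ as one of its free factors, and then to extend $\alpha$ by the identity on the complementary factor. Concretely, writing $T = R \setminus S$, the free group decomposes as a free product $F_R = F_S * F_T$, and correspondingly $C^*_r(F_R)$ is the reduced free product of $(C^*_r(F_S), \tau_S)$ and $(C^*_r(F_T), \tau_T)$ taken with respect to the canonical traces (the restrictions of the canonical trace $\tau$ on $C^*_r(F_R)$). Here the GNS space of $\tau$ is $\ell^2(F_R)$, which is the free product of the GNS spaces $\ell^2(F_S)$ and $\ell^2(F_T)$, so this realization is just the statement that the left regular representation of a free product is the free product of the left regular representations. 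The candidate extension is then $\tilde\alpha = \alpha * \mathrm{id}$.

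For this to make sense I must check that $\alpha$ preserves the canonical trace $\tau_S$, since the reduced free product is functorial only for state-preserving maps. This is exactly where the hypothesis $\mathrm{Card}(S) > 1$ enters: by the results of Powers \cite{pow} and Avitzour \cite{av} quoted above, $C^*_r(F_S)$ is simple with a \emph{unique} tracial state, namely $\tau_S$. Hence $\tau_S \circ \alpha$ is again a tracial state on $C^*_r(F_S)$ and must coincide with $\tau_S$; that is, $\alpha$ is automatically trace preserving. I regard verifying this trace invariance as the crux of the argument — once it is in hand, nothing further about the particular automorphism $\alpha$ is needed, and the hypothesis that $S$ has more than one element is used precisely (and only) here.

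With $\alpha$ trace preserving, I would invoke the functoriality of the reduced free product construction. The unitary implementing $\alpha$ on $\ell^2(F_S)$ fixes the cyclic trace vector $\delta_e$, and likewise the identity on $C^*_r(F_T)$ corresponds to the trivial unitary fixing the cyclic vector of $\ell^2(F_T)$; since both fix the cyclic vectors, they preserve the decomposition of each GNS space into the scalar multiples of the cyclic vector and its orthocomplement, and hence assemble into a unitary $U$ on the free product Hilbert space $\ell^2(F_R)$ that fixes the vacuum vector and intertwines the two free product representations. Conjugation by $U$ then carries $C^*_r(F_R)$ onto itself and restricts to $\alpha$ on $C^*_r(F_S)$ and to the identity on $C^*_r(F_T)$, yielding the desired $*$-automorphism $\tilde\alpha$ extending $\alpha$ (in the degenerate case $S = R$, where $F_T$ is trivial, the extension is simply $\alpha$ itself). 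The only genuine obstacle is the trace-invariance step above; the free-product bookkeeping is routine once state preservation is secured.
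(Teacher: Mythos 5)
Your proposal is correct, and it constructs the same extension $\alpha * \mathrm{id}$ as the paper, but by a genuinely different mechanism for the key analytic step. You identify the crux exactly as the paper does: since Card$(S)>1$, the trace on $C^*_r(F_S)$ is unique (\cite{pow}, \cite[3.1]{av}; the paper cites \cite[Proposition 1]{a}), so $\tau_S\circ\alpha=\tau_S$ automatically, and this is the only place the cardinality hypothesis enters. Where you diverge is in showing the algebraically obvious extension is isometric on the reduced algebra. The paper works by hand on the dense $*$-algebra $A$ generated by $C^*_r(F_S)$ and the remaining generators: having checked that the extension $\alpha'$ preserves the trace on $A$, it deduces isometry from the $\ell^2$-norm estimate $\|\alpha'(a)\|\geq\|\alpha'(ab)\|_2=\|ab\|_2>\|a\|-\epsilon$ (choosing $b\in A$ with $\|b\|_2=1$ by density of $A$ in $\ell^2(F_R)$), applies the same estimate to $\alpha^{-1}$, and extends by continuity. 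You instead invoke the reduced free product decomposition $C^*_r(F_R)\cong C^*_r(F_S)*C^*_r(F_T)$ and its functoriality for state-preserving maps, implementing $\alpha*\mathrm{id}$ spatially by the unitary $U_\alpha * 1$ on the free product Hilbert space; this is sound, including your observation that $U_\alpha$ fixes $\delta_e$ and hence preserves the orthocomplement, which is what makes the free-product assembly of the unitaries legitimate, and your handling of the degenerate case $T=\emptyset$. The trade-off: the paper's route is more elementary and self-contained, needing nothing beyond density of $A$ in $\ell^2(F_R)$ and avoiding the identification of $C^*_r$ of a free product of groups with the reduced free product of the $C^*_r$'s; your route makes well-definedness of the extension automatic (the paper's ``check that $\alpha'$ is a $*$-automorphism of $A$'' is subsumed by the unitary implementation) and generalizes immediately to show that any trace-preserving automorphism of one free factor of a reduced free product extends — which is precisely the form needed for the generalization to $G_R = *_{r\in R}G_r$ sketched in the paper's Concluding Remark 2.
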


\begin{proof} Check that if $\alpha^{\prime}$ is defined
on the *-algebra $A$ generated by $C^*_r(F_{S})$ and the
generators in $R\backslash S$ by applying $\alpha$ to elements of
$C^*_r(F_{S})$ and leaving the other generators alone, then
$\alpha^{\prime}$ is a $*$-automorphism of $A$. Every element of
$C^*_r(F_{R})$ is representable in the form of an element of
$l^{2}(F_{R})$, and the $\mathrm{trace}$ of such an element is
simply the coefficient of the identity. Since the $\mathrm{trace}$
is unique on $C^*_r(F_{S})$ by \cite[Proposition 1]{a} (see also
\cite[3.1]{av}), $\alpha$ preserves the $\mathrm{trace}$. Thus it
is easy to verify that $\alpha^{\prime}$ preserves the
$\mathrm{trace}$ on $A$. Again by density of $A$ in
$l^{2}(F_{R})$, for any $a\in A$ and any $\epsilon>0$ there exists
$b\in A$ such that $||b||_{2}=1$ and $||ab||_{2}>||a||-\epsilon$.
So $||\alpha^{\prime}(b)||_{2}=||b||_{2}=1$ by invariance of the
$\mathrm{trace}$, and hence
$||\alpha^{\prime}(a)||\geq||\alpha^{\prime}(a)\alpha^{\prime}(b)||_{2}=||\alpha^{\prime}(ab)||_{2}=$
$||ab||_{2}>||a||-\epsilon$. Since a similar inequality holds for
$\alpha^{-1}$, we see that $\alpha^{\prime}$ extend by continuity
to an automorphism of $C^*_r(F_{R})$.
\end{proof}

\begin{lemma}\label{LM} The state $f$ is the unique state extension of
$f_{0}$ to $C^*_r(F_{R})$,  and $f$ is a pure state of
$C^*_r(F_{R})$. Moreover $f|_{C^*_r(F_{\mathrm{S}})}$  is pure for
any subset $S$ of $R$ that contains $r_0$.

\end{lemma}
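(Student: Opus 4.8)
The plan is to prove the single statement underlying all three assertions, namely that $f$ is the \emph{unique} state extension of $f_0$. Indeed $f=f_0\circ P_u$ is a state extending $f_0$, since $P_u$ is a (unital, completely positive) conditional expectation fixing $C^*_r(F_u)$ pointwise. Once uniqueness is known, purity of $f$ is automatic: if $f=\tfrac12(\psi_1+\psi_2)$ with $\psi_i$ states, then restricting to $C^*_r(F_u)$ decomposes the pure state $f_0$, forcing $\psi_i|_{C^*_r(F_u)}=f_0$, whence $\psi_1=\psi_2=f$ by uniqueness. For $S\ni r_0$ the expectation factors as $P_u=P_u^S\circ P_S$, where $P_u^S$ is the trace-preserving expectation of $C^*_r(F_S)$ onto $C^*_r(F_u)$, so $f|_{C^*_r(F_S)}=f_0\circ P_u^S$ has exactly the same form; running the uniqueness argument inside $C^*_r(F_S)$ shows $f_0$ extends uniquely there, and hence $f|_{C^*_r(F_S)}$ is pure.

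So let $\phi$ be any state with $\phi|_{C^*_r(F_u)}=f_0$; I will show $\phi=f$. Since $\phi(u)=f_0(u)=1=\phi(u^*)$ and $u$ is unitary, $\phi\big((1-u)^*(1-u)\big)=2-\phi(u)-\phi(u^*)=0$, so the Cauchy--Schwarz inequality for $\phi$ forces $\phi((1-u)a)=0$ for all $a$; the same computation with $u^*$ and on the right gives $\phi(u^n a\,u^m)=\phi(a)$ for all $n,m\in\Zb$ and all $a$, and in particular $\phi(u^n a\,u^{-n})=\phi(a)$. Both $\phi$ and $f$ take the value $1$ at every $u^k\in F_u$, while $f(g)=f_0(P_u(g))=0$ for $g\in F_R\setminus F_u$; hence by linearity and continuity (density of the group algebra) it remains only to prove that $\phi(g)=0$ for every $g\in F_R\setminus F_u$.

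Fix such a $g$. Conjugation invariance gives $\phi(u^n g u^{-n})=\phi(g)$ for every $n$, so for any exponents $n_1<\dots<n_N$,
\[
\phi(g)=\phi\Big(\tfrac1N\sum_{j=1}^N u^{n_j} g\,u^{-n_j}\Big),\qquad\text{so}\qquad |\phi(g)|\le\Big\|\tfrac1N\sum_{j=1}^N u^{n_j} g\,u^{-n_j}\Big\|.
\]
The key point is to choose a sufficiently lacunary sequence $(n_j)$ so that the conjugates $v_j:=u^{n_j}g\,u^{-n_j}$ freely generate a free subgroup of $F_R$. Granting this, each $v_j$ is a Haar unitary and the family is $*$-free, so the reduced C*-algebra of $\langle v_1,\dots,v_N\rangle$ embeds isometrically in $C^*_r(F_R)$, and the Akemann--Ostrand norm estimate \cite{ao} gives $\|\sum_{j=1}^N v_j\|=2\sqrt{N-1}$ for $N\ge 2$. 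Therefore $|\phi(g)|\le 2\sqrt{N-1}/N\to0$ as $N\to\infty$, so $\phi(g)=0$, as required.

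The main obstacle is the free-generation claim. Since the centralizer of $u$ in $F_R$ is exactly $F_u$, the hypothesis $g\notin F_u$ guarantees that $g$ has infinite order and does not commute with $u$, so the $v_j$ are distinct hyperbolic elements whose axes in the Cayley tree of $F_R$ are the $u^{n_j}$-translates of the axis of $g$. I would establish free generation by a ping-pong argument: for $n_j$ growing fast enough these axes are pairwise far apart, and the standard ping-pong/Bass--Serre criterion then shows that the $v_j$ freely generate a free group. (Alternatively one can bypass the combinatorics and obtain the same operator-norm decay either from a Haagerup-type inequality for the elements $u^n g\,u^{-n}$, or by excising $f_0$ with functions $e_\lambda\in C^*_r(F_u)$ satisfying $e_\lambda(1)=1$, which reduces uniqueness to the estimate $\|e_\lambda g\,e_\lambda\|\to0$; in every version this operator-norm decay is the crux.)
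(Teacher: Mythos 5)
Your proposal is correct, and its skeleton coincides with the paper's: Cauchy--Schwarz to get the conjugation invariance $\phi(u^nau^{-n})=\phi(a)$, reduction (by density of the group algebra) to showing $\phi(g)=0$ for $g\in F_R\setminus F_u$, and then a norm bound on Ces\`aro averages of the conjugates; your convexity argument for purity and your factorization $P_u=P^S_u\circ P_S$ for the restriction statement are exactly what the paper leaves implicit in ``an easy convexity argument'' and ``follows immediately.'' Where you genuinely diverge is the crux, the norm decay. The paper splits $l^2(F_R)=H_0\oplus H_1$, with $H_0$ spanned by the $w\xi$ for reduced words $w$ beginning in a nonzero power of $u$, notes $u^nH_1\subset H_0$ and $sH_0\subset H_1$, and quotes a soft Hilbert-space averaging lemma of Choi \cite{choi} (see also \cite{av}) to get $\|(1/k)\sum_{n=1}^ku^nsu^{-n}\|\leq 2/\sqrt{k}$ with no combinatorial group theory at all. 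You instead prove the conjugates $v_j=u^{n_j}gu^{-n_j}$ freely generate a free subgroup, use the canonical isometric embedding of a subgroup's reduced C*-algebra, and invoke the Akemann--Ostrand norm $\|\sum_{j=1}^Nv_j\|=2\sqrt{N-1}$ from \cite{ao}; this yields the same $O(1/\sqrt{N})$ decay with a sharp constant. Your one sketched step---free generation---is true, and in fact easier than your ping-pong hedging suggests: writing $g=u^ahu^b$ in reduced form with $h$ beginning and ending in letters other than $u^{\pm 1}$ (possible since $g\notin F_u$), \emph{any} pairwise distinct exponents $n_j$ already work, no lacunarity needed. Indeed, every reduced word in the $v_j^{\pm 1}$ collapses to a word $u^{p_0}h^{k_1}u^{m_1}h^{k_2}\cdots h^{k_t}u^{p_t}$ with all $k_i\neq 0$ and all interior $m_i\neq 0$: the middle $u$-exponent at a junction of opposite-signed factors is $n_k-n_j\neq 0$, while a vanishing exponent can occur only between same-signed factors and then merely merges powers of $h$; since each $h^{k}$ still begins and ends in non-$u$ letters, no further cancellation occurs and the word is nontrivial. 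So your route trades the paper's Hilbert-space lemma for normal-form combinatorics plus a cited exact norm computation; both are complete, the paper's being shorter and more robust (it needs only $g(u)=1$, not the full restriction to $f_0$), yours being more quantitative and self-contained modulo \cite{ao}.
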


\begin{proof} Let $g$ be a state of $C^*_r(F_{R})$ such that
$g(u)=1$. The Cauchy-Schwarz inequality applies to show that
$g((1-u)a)=g(a(1-u))=0$ for any $a\in C^*_r(F_{R})$. By induction,
$g(u^n)=g(u^{-n})=1$ for every natural number $n$. Fix $s\in
F_{R}\setminus F_{u}$. By the Cauchy-Schwarz inequality again, as
above, $g(u^nsu^{-n})=g(s)$ for every natural number $n$. Taking
$\xi$ to be the canonical trace vector in $l^2(F_R)$,
$l^2(F_R)=H_0\oplus H_1$, where $H_0$ is the closed linear span of
all vectors of form $w\xi$ with $w$ a reduced word in $F_R$ with a
non-zero power of $u$ on the left, and $H_1$ is the closed linear
span of those $w\xi$ with $w$ not ending in a non-zero power of
$u$ on the left. Then $u^nH_1\subset H_0$ for any non-zero integer
$n$ and $sH_0\subset H_1$. By \cite[Lemma 2.2]{choi} (see also
\cite[Lemma 3.0]{av})
   \[|g(s)| = \lim_{k\to\infty}|(1/k)\sum_{n=1}^k g(u^nsu^{-n})|
   \leq \lim_{k\to\infty}\|(1/k)\sum_{n=1}^k u^nsu^{-n}\|
   \leq \lim_{k\to\infty}\frac{2}{\sqrt{k}} = 0.\]
By linearity and continuity of $g$, this implies that $g
=g|_{C^*_r(F_{u})}\circ P_{u}$ and hence that $g =f$. An easy
convexity argument shows that $f$ is a pure state.

The conclusion of the last sentence of the Lemma follows
immediately from the conclusion of the first sentence.
\end{proof}

\begin{prop}\label{P}
Let $\{G_r\}_{r\in R}$ be a set of nontrivial countable groups and
for non-empty $S\subset R$, let $G_S$ be the free product
$(*_{r\in S}G_r)$. Given a nonempty countable subset $S_0$ of R,
if $g$ is a pure state on $C^*_r(G_R)$ there is a countable subset
$S$ of $R$ containing $S_0$ such that $g|_{C^*_r(G_S)}$ is a pure
state of $C^*_r(G_S)$. Moreover $C^*_r(G_S)$ is separable and also
simple if $|G_s|>2$ for some $s\in S$.
\end{prop}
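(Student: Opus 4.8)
The plan is to prove the main assertion --- existence of a countable $S \supseteq S_0$ with $g|_{C^*_r(G_S)}$ pure --- by a closing-off (L\"owenheim--Skolem) argument, and to dispatch the ``moreover'' clause by soft facts. Write $A = C^*_r(G_R)$. The starting observation is that $A$ is the norm closure of the directed union $\bigcup\{C^*_r(G_S) : S \subseteq R \text{ countable}\}$, since every element of $G_R$ is a reduced word involving only finitely many factors $G_r$. For a subalgebra $B \subseteq A$ the restriction $g|_B$ is pure iff its GNS representation is irreducible, and this I would express in the following local form, coming from Kadison transitivity: $g|_B$ is pure iff for all $c, c' \in B$ and $\epsilon > 0$ there is $b \in B$, with $\|b\|$ bounded in terms of $c,c'$, such that
\[
g\big((bc - c')^*(bc - c')\big) < \epsilon ,
\]
i.e. $\pi_g(B)$ acts topologically transitively on the cyclic subspace $\overline{\pi_g(B)\xi_g}$. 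The forward direction is Kadison transitivity; the converse is the standard fact that topological transitivity of a $*$-representation forces irreducibility, once one transfers the displayed condition from the dense set of vectors $\pi_g(c)\xi_g$ to all vectors by a three-$\epsilon$ estimate using the norm bound on $b$.

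With this criterion in hand I would build $S$ recursively. Since $g$ is \emph{pure on all of} $A$, its GNS representation $\pi_g$ is irreducible, so Kadison transitivity supplies, for every pair $c, c' \in A$ with $\pi_g(c)\xi_g \neq 0$ and every $\epsilon > 0$, an element $b \in A$ witnessing the displayed inequality. The key point is that any such $b$, being a norm limit of finite linear combinations of group elements, involves only countably many of the factors $G_r$. Now take $S_0$ as given, and at stage $n$ fix a countable dense subset $D_n$ of $C^*_r(G_{S_n})$ and, for every $c, c' \in \bigcup_{k \le n} D_k$ and every rational $\epsilon > 0$, choose a witness $b \in A$ as above; let $S_{n+1}$ be $S_n$ together with the (countably many) indices of all factors occurring in all these witnesses. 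Put $S = \bigcup_n S_n$. Then $S$ is countable and contains $S_0$, and by construction each chosen witness lies in $C^*_r(G_S)$.

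It remains to verify that $g|_{C^*_r(G_S)}$ satisfies the criterion. Since $\bigcup_n D_n$ is dense in $C^*_r(G_S)$, an arbitrary triple $(c, c', \epsilon)$ can be approximated by one appearing at some finite stage; the witness produced at that stage lies in $C^*_r(G_S)$ and, after absorbing the approximation errors (controlled because the witnesses have bounded norm), still verifies the inequality for $(c, c', \epsilon)$. Hence $g|_{C^*_r(G_S)}$ is pure. Finally, $C^*_r(G_S)$ is separable because the countable free product $G_S$ of countable groups is countable, so $C^*_r(G_S)$ is generated by countably many unitaries; and when some $|G_s| > 2$ (adjoining, if necessary, one further index so that a second factor is nontrivial and the reduced free product is nondegenerate) simplicity follows from the criterion of Avitzour \cite{av} together with the results of Powers \cite{pow} quoted in the introduction.

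I expect the main obstacle to be the purity criterion and its reflection: one must choose a characterization of purity that is genuinely \emph{local} --- a ``for all $c,c'$ there exists $b$'' statement whose witnesses can be confined to countably many generators --- and then check that it passes to the limit algebra $C^*_r(G_S)$. Concretely, the delicate points are the converse direction of the criterion (that topological transitivity on a dense set of vectors, with the norm control furnished by Kadison transitivity, really yields an irreducible GNS representation, hence a pure restriction) and the bookkeeping ensuring that the countably many witnesses accumulated over all stages involve only countably many factors, so that $S$ remains countable.
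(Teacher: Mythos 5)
Your proposal is correct and is essentially the paper's own argument in different clothing: the paper runs the same closing-off induction, using Kadison transitivity on the full irreducible GNS representation to produce (unitary) witnesses with countable support, tracking instead an increasing chain of separable subspaces $H_i$ with countable dense subsets $X_i$ of their unit spheres, and concluding at the limit via the same density-plus-norm-control step that approximate transitivity on a dense set of vectors forces irreducibility; your algebraic transitivity criterion $g((bc-c')^*(bc-c'))<\epsilon$ is just that bookkeeping expressed through the state. One repair: the parenthetical adjoining of a further index to secure a second nontrivial free factor for Avitzour's simplicity criterion must occur \emph{before} the closing-off (enlarge $S_0$, or make each $S_n$ infinite as the paper does with its $S_i$), not after $S$ is built, since purity of $g|_{C^*_r(G_S)}$ is not inherited when $S$ is enlarged.
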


\begin{proof} Assume without loss of generality that $R$ is
uncountable. For any non-empty countable $S\subset R$,
$C^*_r(G_S)$ is separable, and by \cite[3.1]{av} simple if
$|G_s|>2$ for some $s\in S$. If $(\pi_g, H_g, \xi_g)$ is the GNS
representation of $C^*_r(G_R)$ corresponding to $g$, sequences of
sets
   \[S_1\subset S_2\subset \ldots \subset R,\]
with each $S_i$ countably infinite, closed separable linear
subspaces
   \[{\Bbb C}\xi_g = H_1\subset H_2\subset \ldots \subset H_g\]
and, for each $i\geq 2$, a countable dense subset $X_i$ of the
unit sphere of $H_i$ such that
   \[X_2\subset X_3\subset \ldots\]
are constructed inductively so that
   \[\pi_g(C^*_r(F_{S_i}))H_i \subseteq H_{i+1}\]
for $i\geq 1$. Let $S_1$ be a non-empty countable subset of $R$
containing $S_0$. For the inductive step, given $S_i$ and $H_i$,
let $H_{i+1}$ be the closed linear span of
$\pi_g(C^*_r(G_{S_i}))H_i$, which is separable, and let $X_{i+1}$
be a countable dense subset of the unit sphere of $H_{i+1}$
containing $X_i$. By Kadison's transitivity theorem there is a
countable set ${\mathcal U}_{i+1}$ of unitaries in $C^*_r(G_R)$
such that for $\xi,\eta \in X_{i+1}$, $\pi_g(u)\xi = \eta$ for
some $u\in {\mathcal U}_{i+1}$. Since each such $u$ is a
norm-limit of a sequence of finite linear combinations of elements
of $G_R$, there is a countable subset $S'_{i+1}$ of $R$ such that
${\mathcal U}_{i+1}\subset C^*_r(G_{S'_{i+1}})$. Let $S_{i+1} =
S'_{i+1} \cup S_i$. Now let
   \[S=\bigcup_{i=1}^{\infty} S_i, \quad X = \bigcup X_i, \quad H
   = \overline{\bigcup_{i=1}^{\infty} H_i}.\]
Then $S$ and $X$ are countable, $H$ is separable,
$\pi_g(C^*_r(G_S))H \subseteq H$ and $X$ is dense in the unit
sphere of $H$. If $\xi, \eta \in X$, then $\pi_g(v)\xi = \eta$ for
some unitary  $v\in C^*_r(G_S)$. It follows that for any
$\varepsilon > 0$ and unit vectors $\xi, \eta \in H$,
$\|\pi_g(w)\xi - \eta\| < \varepsilon$ for some unitary $w\in
C^*_r(G_S)$, which implies that $\pi_g(C^*_r(G_S))|_H$ acts
irreducibly on $H$. Since $g|_{C^*_r(G_S)}$ is the state of
$C^*_r(G_S)$ corresponding to $\xi_g$, $g|_{C^*_r(G_S)}$ is pure.
\end{proof}

\begin{theo}\label{TH} Any two pure states of $C^*_r(F_{R})$ are
*-automorphism equivalent.

\end{theo}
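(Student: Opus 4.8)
The plan is to single out the distinguished pure state $f$ and to show that \emph{every} pure state $g$ of $C^*_r(F_{R})$ is *-automorphism equivalent to $f$. Since *-automorphism equivalence is an equivalence relation (if $g = f\circ\alpha$ and $g'=f\circ\beta$ then $g = g'\circ\beta^{-1}\circ\alpha$), this immediately yields that any two pure states are equivalent. So fix an arbitrary pure state $g$. First I would pass to a separable simple subalgebra: applying the Proposition with each factor group equal to $\Zb$, so that $G_R = F_R$, to the countable set $S_0 = \{r_0\}$, I obtain a countably infinite subset $S\subseteq R$ with $r_0\in S$ such that $g|_{C^*_r(F_{S})}$ is a pure state and $C^*_r(F_{S})$ is separable, unital, and simple. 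Because $r_0\in S$, the last sentence of the second Lemma also gives that $f|_{C^*_r(F_{S})}$ is pure. Thus $g|_{C^*_r(F_{S})}$ and $f|_{C^*_r(F_{S})}$ are two pure states of one separable simple C*-algebra.

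Next I would invoke the homogeneity theorem for the pure state space of a separable simple C*-algebra (Kishimoto--Ozawa--Sakai): there is a *-automorphism $\alpha$ of $C^*_r(F_{S})$ with $g|_{C^*_r(F_{S})} = f|_{C^*_r(F_{S})}\circ\alpha$. By the first Lemma, $\alpha$ extends to a *-automorphism $\tilde\alpha$ of $C^*_r(F_{R})$ that restricts to $\alpha$ on $C^*_r(F_{S})$.

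The decisive step is then to propagate this equality from $C^*_r(F_{S})$ to all of $C^*_r(F_{R})$ by means of the uniqueness of state extension. Consider the pure state $h := g\circ\tilde\alpha^{-1}$. Since $\tilde\alpha^{-1}$ agrees with $\alpha^{-1}$ on $C^*_r(F_{S})$, for $a\in C^*_r(F_{S})$ one has $h(a) = g(\alpha^{-1}(a)) = f(\alpha(\alpha^{-1}(a))) = f(a)$, so $h$ and $f$ agree on $C^*_r(F_{S})$. In particular $h(u) = f(u) = f_0(u) = 1$, and by the second Lemma (which shows that $f$ is the \emph{unique} state sending $u$ to $1$) we conclude $h = f$, that is, $g = f\circ\tilde\alpha$. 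Hence $g$ is *-automorphism equivalent to $f$, which proves the theorem.

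I expect the single deep ingredient, and the main obstacle, to be the homogeneity result for separable simple C*-algebras; the remaining work is purely organizational. The whole architecture is designed so that this separable theorem can do the heavy lifting: the Proposition carves out, inside the inseparable algebra $C^*_r(F_{R})$, a separable simple subalgebra on which homogeneity applies, while the unique-extension property of $f_0$ (equivalently, that $f$ is the only state with $f(u)=1$) is exactly the device that transports the equivalence obtained on $C^*_r(F_{S})$ back to the whole algebra. The only points requiring care are ensuring $r_0\in S$ (so both restrictions are pure and $u$ lies in the subalgebra) and that $S$ is large enough for simplicity, both of which are guaranteed by feeding $S_0=\{r_0\}$ into the Proposition and using that the resulting $S$ is countably infinite.
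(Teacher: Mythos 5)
Your proposal is correct and follows essentially the same route as the paper's proof: use the Proposition (with each $G_r = \Zb$ and $S_0=\{r_0\}$) to find a countable $S \ni r_0$ with $g|_{C^*_r(F_S)}$ pure, apply the Kishimoto--Ozawa--Sakai homogeneity theorem on the separable simple algebra $C^*_r(F_S)$, extend the automorphism via the first Lemma, and conclude by the uniqueness of the state extension from the second Lemma. Your only (harmless) deviations are cosmetic: the paper treats countable $R$ as a separate immediate case of \cite{kos}, while you handle it uniformly, and you verify the final equality by directly computing $h(u)=1$ for $h = g\circ\tilde\alpha^{-1}$ rather than, as the paper does, transporting the unique-extension property of $f|_{C^*_r(F_S)}$ through the automorphism.
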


\begin{proof} If $R$ is countable, the conclusion is immediate from
\cite{kos}. Assume that $R$ is uncountable. Let $g$ be a pure
state of $C^*_r(F_{R})$. We shall show that $g$ is *-automorphism
equivalent to $f$. By Proposition 0.3 there is a countably
infinite subset $S\subset R$ such that $r_0\in S$ and
$g|_{C^*_r(F_{\mathrm{S}})}$ is pure. We have already noted that
$C^*_r(F_{S})$ is simple, and it is obviously separable, so by
\cite{kos} choose a *-automorphism $\gamma_{0}$ of $C^*_r(F_{S})$
such that $g|_{C^*_r(F_S)}=\gamma_{0}^{*}(f|_{C^*_r(F_{S})})$. By
Lemma 0.1, extend $\gamma_{0}$ to a *-automorphism $\gamma$ of
$C^*_r(F_{R})$. We must show that $\gamma^{*}(f)=g$. Lemma 0.2
shows that $f|_{C^*_r(F_{S})}$ has unique state extension to
$C^*_r(F_{R})$. Since $\gamma$ is a *-automorphism extending
$\gamma_{0}$, the same uniqueness of state extension must follow
for $\gamma^{*}(f|_{C^*_r(F_{S})})=g|_{C^*_r(F_{S})}$. Thus
$\gamma^{*}(f)=g$.
\end{proof}

The next result is in contrast to Corollary 0.9 of \cite{cn3}.

\begin{theo}\label{TH} If $g$ is a pure state on $C^*_r(F_R)$, then its
hereditary kernel,
$$\{a \in C^*_r(F_R): g(a^*a+aa^*)=0\},$$ contains a sequential abelian
approximate unit, and hence a strictly positive element.
\end{theo}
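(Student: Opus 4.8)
The plan is to reduce to the distinguished state $f$ and then exhibit a single strictly positive element of the hereditary kernel, whose functional calculus furnishes the required approximate unit. Writing $A=C^*_r(F_R)$, I would first invoke Theorem 0.4: since $g$ is pure there is a $*$-automorphism $\gamma$ of $A$ with $g=f\circ\gamma$. Denoting by $B_g$ and $B_f$ the two hereditary kernels, the identity $\gamma(a^*a+aa^*)=\gamma(a)^*\gamma(a)+\gamma(a)\gamma(a)^*$ gives at once $B_g=\gamma^{-1}(B_f)$, so $\gamma^{-1}$ is a $*$-isomorphism carrying any strictly positive element (resp.\ sequential abelian approximate unit) of $B_f$ onto one of $B_g$. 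Hence it suffices to treat $g=f$.

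For $f$, set $h_0=(1-u)^*(1-u)=2-u-u^{-1}\in C^*_r(F_u)$. Since $h_0=\phi(u)$ with $\phi(z)=|1-z|^2$ vanishing only at $z=1$, and $f=f_0\circ P_u$ with $f_0$ evaluation at $1$, one has $f(h_0^2)=f_0(\phi^2)=\phi(1)^2=0$, so $h_0\in B_f$. I would then claim that $h_0$ is \emph{strictly positive} in $B_f$. Granting this, the elements $e_n(u)$ with $e_n\in C(\mathbb{T})$ vanishing at $1$ and increasing to $1$ off $1$ (equivalently, functions $g_n(h_0)$ with $g_n(0)=0$ and $g_n\uparrow 1$ on $(0,\infty)$) lie in $C^*(h_0)\subseteq B_f$, commute, and form an increasing sequence that is an approximate unit for $B_f$ by the standard property of strictly positive elements. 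This is exactly a sequential abelian approximate unit, and $h_0$ is the promised strictly positive element.

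By the bijective correspondence between hereditary C*-subalgebras of $A$ and open projections in $A^{**}$, $B_f$ is the hereditary subalgebra of the open projection $1-s(f)$, where $s(f)$ is the (closed) support projection of $f$, while the support projection of $h_0=\phi(u)$ is $1-r$ with $r=\chi_{\{1\}}(u)$ the spectral projection of $u$ at $1$. Thus $h_0$ is strictly positive in $B_f$ precisely when $r=s(f)$ in $A^{**}$, and proving this equality is the heart of the matter. One inequality is free: because $r\in C^*_r(F_u)^{**}$ and $f=f_0\circ P_u$, we get $f(r)=f_0(\chi_{\{1\}})=1$, whence $s(f)\le r$.

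For the reverse inequality I would use Lemma 0.2. Suppose $p\le r$ were a nonzero projection in $A^{**}$ with $f(p)=0$, and choose a state $\psi$ on $A$ with $\psi^{**}(p)=1$; then $\psi^{**}(r)=1$, so the probability measure $\psi|_{C^*_r(F_u)}$ on the circle gives full mass to $\{1\}$, forcing $\psi|_{C^*_r(F_u)}=f_0$. By the unique-extension conclusion of Lemma 0.2, $\psi=f$, so $f(p)=\psi^{**}(p)=1$, a contradiction. Hence no such $p$ exists, $r=s(f)$, and the proof is complete. The main obstacle is exactly this last step: a priori $B_f$ might strictly contain the hereditary subalgebra $\overline{h_0Ah_0}$ generated by $h_0$, and the faithful irreducible GNS representation $\pi_f$ cannot detect any gap, since it sends both $r$ and $s(f)$ to the rank-one projection onto $\xi_f$. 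Only by passing to $A^{**}$ and exploiting the unique-state-extension property peculiar to the free group (Lemma 0.2) can one collapse this gap and conclude that the obvious abelian approximate unit built from $u$ already exhausts the full hereditary kernel.
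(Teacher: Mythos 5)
Your proof is correct, and it runs parallel to the paper's argument at the structural level while differing in how the key excision step is handled. Both begin identically: Theorem 0.4 reduces everything to the single state $f$, since a *-automorphism carries hereditary kernels to hereditary kernels together with their strictly positive elements and abelian approximate units (your computation $B_g=\gamma^{-1}(B_f)$ is exactly right). Both arguments also turn on the same pivotal fact extracted from Lemma 0.2: the closed projection $\chi_{\{1\}}(u)\in C^*_r(F_u)^{**}\subseteq C^*_r(F_R)^{**}$ is precisely the support projection of $f$. In the paper this appears as the assertion that $p=\lim a_n$ is a \emph{minimal} projection in the bidual; in your write-up it is the equality $r=s(f)$, and your derivation (any state charging a nonzero subprojection of $r$ restricts on $C^*_r(F_u)$ to a probability measure concentrated at $1$, hence to $f_0$, hence equals $f$ by the unique-extension conclusion of Lemma 0.2) is a correct unwinding of the same mechanism. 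Where you diverge: the paper simply cites \cite[Prop.\ 2.2]{excising} to conclude that an excising sequence $\{a_n\}$ for $f_0$ in $C^*_r(F_u)$ excises $f$ and that $\{1-a_n\}$ is the desired sequential abelian approximate unit, obtaining a strictly positive element afterwards as $\sum 2^{-n}(1-a_n)$; you instead prove the needed instance of that excision machinery by hand, working with range and open projections in $A^{**}$ to show that the explicit element $h_0=2-u-u^{-1}$ is strictly positive in the hereditary kernel, and then derive the approximate unit from $h_0$ by functional calculus --- the reverse order. Your route buys self-containedness (nothing beyond standard bidual facts) plus a concrete strictly positive element; the paper's buys brevity. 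One small presentational point: you describe $B_f$ as the hereditary subalgebra of the \emph{open} projection $1-s(f)$ before knowing $s(f)$ is closed; for a pure state this is in fact true (its support projection is closed), and in any case the identity $B_f=A\cap(1-s(f))A^{**}(1-s(f))$ holds directly and the openness of $1-s(f)$ follows a posteriori from $s(f)=r$, so nothing in your argument is circular.
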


\begin{proof} By Theorem 0.4 it suffices to prove this for $f$. Choose an
excising sequence $\{a_n\}$ for $f_0$ in $C^*_r(F_u)$, as defined
in \cite{excising}. Let $p = \lim a_n$ in $C^*_r(F_R)^{**}$. By
Lemma 0.2, $p$ is a minimal projection there.  By \cite[Prop.
2.2]{excising}, $\{a_n\}$ will excise $f$ and $\{1-a_n\}$ will be
an approximate unit for $\{a \in C^*_r(F_R): f(a^*a+aa^*)=0\},$ so
$\sum_i^\infty 2^{-n}(1-a_n)$ is strictly positive there.
\end{proof}

\begin{theo}\label{TH} Let $r_0 \in S \subset R$.

1. Any pure state of $C^*_r(F_S)$ has a unique extension to a pure
state of $C^*_r(F_R)$.

2. The projection $P_S$ is the unique conditional expectation of
$C^*_r(F_R)$ onto $\Cc^*(F_S)$.
\end{theo}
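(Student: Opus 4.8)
The plan is to prove the two parts separately, though they are closely related and Part 1 should feed into Part 2.

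**Part 1.** Let $h$ be a pure state of $C^*_r(F_S)$ and let me show it has a unique pure-state extension to $C^*_r(F_R)$. My first step is to invoke Proposition 0.3: there I would apply it (in the $F_R$ setting with $S_0 = S$) to obtain, for a given pure state $g$ of $C^*_r(F_R)$ extending $h$, a countable $S' \supseteq S$ with $g|_{C^*_r(F_{S'})}$ pure. But this is the wrong direction for uniqueness. Instead, the cleaner route mirrors the proof of Theorem 0.4: by Kadison–Ringrose transitivity (the result of \cite{kos}) applied to the separable simple algebra $C^*_r(F_S)$, there is a $*$-automorphism $\gamma_0$ of $C^*_r(F_S)$ carrying $h$ to $f|_{C^*_r(F_S)}$, and by Lemma 0.1 I extend $\gamma_0$ to a $*$-automorphism $\gamma$ of $C^*_r(F_R)$. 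By Lemma 0.2, $f|_{C^*_r(F_S)}$ has a \emph{unique} state extension to $C^*_r(F_R)$, namely $f$. Transporting this uniqueness through $\gamma^{-1}$ gives that $h = \gamma_0^*(f|_{C^*_r(F_S)})$ has the unique extension $\gamma^*(f)$, which is pure since $f$ is pure and $\gamma$ is a $*$-automorphism. This establishes Part 1.

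**Part 2.** Let $E$ be any conditional expectation of $C^*_r(F_R)$ onto $C^*_r(F_S)$. The strategy is to show $E$ agrees with $P_S$ on a dense set, namely the group elements $w \in F_R$. For $w \in F_S$ this is automatic since $E$ restricts to the identity on $C^*_r(F_S)$. The crux is to show $E(w) = 0 = P_S(w)$ whenever $w \in F_R \setminus F_S$. Here I would exploit the trace-preservation and uniqueness machinery: a conditional expectation onto $C^*_r(F_S)$ composed with the unique trace on $C^*_r(F_S)$ yields a trace on $C^*_r(F_R)$, which by uniqueness of trace (\cite{a}, \cite{av}) must be the canonical trace $\tau$. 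Thus $\tau(E(a)) = \tau(a)$ for all $a$. To pin down $E(w)$ itself for $w \notin F_S$, I would use the module property $E(awb) = aE(w)b$ for $a,b \in C^*_r(F_S)$ together with an averaging argument analogous to the one in Lemma 0.2: for a suitable generator $u_r$ with $r \in S$, the elements $u_r^n w u_r^{-n}$ drift away under $E$ in a way that forces, via the $2/\sqrt{k}$ Cesàro estimate from \cite{choi}, the conclusion $E(w) = 0$.

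The main obstacle I anticipate is the rigorous reduction in Part 2 forcing $E(w)=0$ for $w \notin F_S$. Trace-preservation alone only controls the identity-coefficient of $E(w)$, not the full element $E(w) \in C^*_r(F_S)$, so the argument must show that every Fourier coefficient of $E(w)$ (against group elements $v \in F_S$) vanishes. I would compute $\tau(E(w) v^*) = \tau(E(wv^*)) = \tau(wv^*)$; since $w \notin F_S$ and $v \in F_S$ we have $wv^* \neq e$, so $\tau(wv^*) = 0$, whence every coefficient of $E(w)$ vanishes and $E(w)=0=P_S(w)$. This is in fact cleaner than the averaging approach: trace-preservation of $E$, which itself follows from uniqueness of trace, delivers Part 2 directly once one checks $E$ is trace-preserving. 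Extending from group elements to all of $C^*_r(F_R)$ is then routine by linearity and norm-continuity, giving $E = P_S$.
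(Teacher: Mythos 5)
Your Part~1 is essentially the paper's own argument: move the given pure state to $f|_{C^*_r(F_S)}$ by an automorphism of $C^*_r(F_S)$, extend that automorphism to $C^*_r(F_R)$ by Lemma~0.1, and transport the unique-extension property supplied by Lemma~0.2. One scope issue: you invoke \cite{kos} directly, which requires $C^*_r(F_S)$ to be separable, i.e.\ $S$ countable, whereas the theorem allows uncountable $S$; the paper instead cites Theorem~0.4, which covers that case (the reduction to countable sets via Proposition~0.3 is already absorbed there). The genuine gap is in Part~2, at its pivotal step: you assert that composing a conditional expectation $E$ with ``the unique trace'' $\tau_S$ on $C^*_r(F_S)$ ``yields a trace on $C^*_r(F_R)$,'' to be identified with the canonical trace $\tau$ by uniqueness of trace. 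A conditional expectation composed with a trace on its range is a \emph{state}, not in general a tracial state: bimodularity only gives $\tau_S(E(bx))=\tau_S(bE(x))=\tau_S(E(x)b)=\tau_S(E(xb))$ for $b\in C^*_r(F_S)$, i.e.\ the state $\phi=\tau_S\circ E$ is centralized by the subalgebra, and traciality against arbitrary pairs of elements of $C^*_r(F_R)$ does not follow from the conditional expectation axioms. Since uniqueness of trace identifies only states already known to be tracial, your justification (``trace-preservation of $E$ \dots\ follows from uniqueness of trace'') is circular, and the Fourier-coefficient computation $\tau(E(w)v^*)=\tau(wv^*)=0$ is left without its key input. (Separately, when $S=\{r_0\}$, which the hypothesis $r_0\in S$ permits, ``the unique trace on $C^*_r(F_S)$'' is vacuous: $C^*_r(F_u)\cong C(\mathbb{T})$ has many traces.)

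The gap is repairable, and the repair is exactly the averaging you set aside as less clean. Observe that $\phi=\tau_S\circ E$ satisfies $\phi(vxv^*)=\tau_S(vE(x)v^*)=\phi(x)$ for every unitary $v\in C^*_r(F_S)$. If $x\in F_R\setminus F_S$ then in particular $x\notin F_u$, so the estimate from Lemma~0.2 gives $|\phi(x)|=|(1/k)\sum_{n=1}^k\phi(u^nxu^{-n})|\leq \|(1/k)\sum_{n=1}^k u^nxu^{-n}\|\leq 2/\sqrt{k}\to 0$, whence $\phi(x)=0$. This is precisely what your coefficient computation needs: for $w\in F_R\setminus F_S$ and $v\in F_S$ one has $wv^*\in F_R\setminus F_S$, so $\tau_S(E(w)v^*)=\tau_S(E(wv^*))=\phi(wv^*)=0$ (no full trace-preservation of $E$ is required), and injectivity of $a\mapsto a\xi$ on $C^*_r(F_S)$ then yields $E(w)=0=P_S(w)$, hence $E=P_S$ by linearity, contractivity and density. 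With that patch your Part~2 is correct and genuinely different from the paper's, which is a soft duality argument deduced from Part~1: if $Q\neq P_S$, then $Q^*$ and $P_S^*$ must differ at some pure state of $C^*_r(F_S)$ by Krein--Milman, contradicting the unique extension property. Your route is self-contained (Part~2 never uses Part~1, nor \cite{kos}) and identifies $E$ explicitly on group elements, at the cost of redoing the averaging analysis; the paper's is shorter because it reuses the pure-state machinery already in hand.
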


\begin{proof} 1. By Theorem 0.4, any pure state of $C^*_r(F_S)$ is
*-automorphism equivalent to $f|_{C^*_r(F_S)}$, and thus has the
unique extension property since Lemma 0.2 shows that
$f|_{C^*_r(F_S)}$ has that property.

2. If there were another conditional expectation $Q:C^*_r(F_R) \to
C^*_r(F_S)$ distinct from $P_S$, then the duals $Q^*$ and $P_S^*$
would have to be different on some element  of $C^*_r(F_S)^*$,
hence on some state of $C^*_r(F_S)$, hence on some pure state of
$C^*_r(F_S)$ by the Krein Milman Theorem \cite[p.\ 32]{kr}. This
is impossible by part 1 of this theorem.
\end{proof}

\bigskip
\noindent {\bf Concluding Remarks.} 1. A very similar proof to
that of Proposition 3 shows the related result that if $B$ is a
separable C*-subalgebra of an inseparable C*-algebra $A$, then if
$g$ is a pure state of $A$, there is a separable C*-subalgebra $C$
of $A$ such that $B\subseteq C$ and $g|_C$ is pure. An analogous
induction argument shows moreover that if $A$ is simple, then a
simple $C$ with these properties can be found.

\smallskip\noindent 2. The proof of Theorem 0.4 and the preceding lemmas
generalize in an obvious way to the general free product groups
$G_R = *_{r\in R}G_r$ considered in Proposition 0.3, provided that
one of the constituent groups $G_{r_0}$ is abelian with an element
of infinite order. Thus any two pure states of $C^*_r(G_R)$ are
*-automorphism equivalent. The corresponding generalizations of
Theorems 0.5 and 0.6 to these free product groups then follow,
with $G_{r_0}$ taking the place of $F_u$.

\end{document}